\documentclass[11pt]{amsart}
\usepackage{geometry}
\usepackage{empheq}        
\usepackage{amssymb}       
\usepackage{amsthm}        
\usepackage{esint}         
\usepackage{dsfont}        
\usepackage{mathrsfs}      
\usepackage{cases}

\usepackage{xcolor}        
\usepackage{graphicx}      
\usepackage{tikz}          
\usetikzlibrary{cd}        

\usepackage{array}
\usepackage{enumitem}
\usepackage{tabularx, booktabs}

\usepackage[abbrev]{amsrefs}

\usepackage[colorlinks=true, allcolors=blue]{hyperref}
\hypersetup{
    colorlinks = true,
	allcolors = blue
}

\theoremstyle{plain}
\newtheorem{thm}{Theorem}[section]
\newtheorem{cor}[thm]{Corollary}

\newtheorem{lem}[thm]{Lemma}

\theoremstyle{definition}

\newtheorem{rem}[thm]{Remark}

\numberwithin{equation}{section}

\DeclareMathOperator\vol{vol}
\DeclareMathOperator\dive{div}

\DeclareMathOperator\sn{sn_\delta}
\DeclareMathOperator\cn{cn_\delta}
\DeclareMathOperator\tn{tn_\delta}

\begin{document}
\title{Extrinsic upper bounds for the harmonic mean of  eigenvalues in curved manifolds}

\author{Hang Chen}
\address[Hang Chen]{School of Mathematics and Statistics, Northwestern Polytechnical University, Xi' an 710129, P. R. China \\ email: chenhang86@nwpu.edu.cn}
\thanks{Chen is supported by NSFC Grant No.~12571054}
\author{Yuna Gao}
\address[Yuna Gao]{School of Mathematics and Statistics, Northwestern Polytechnical University, Xi' an 710129, P. R. China \\ email: gaoyuna@mail.nwpu.edu.cn}

\begin{abstract}
	Let $M$ be an $m (\ge2)$-dimensional closed orientable submanifold in a Riemannian manifold of sectional curvature bounded above by $\delta$.
	We obtain the extrinsic upper bounds of the harmonic mean of the first $m$ nonzero eigenvalues of  the Laplacian on $M$, which extend the previous related results for the first nonzero eigenvalues.
\end{abstract}

\keywords{Reilly inequalities, eigenvalues, mean curvatures, curved manifolds.}
\subjclass[2020]{58C40, 53C42, 35P15}

\maketitle

\section{Introduction}

Reilly-type inequalities are important estimates in differential geometry.
They indicate that the first nonzero eigenvalue of  the Laplacian on a submanifold can be controlled by the extrinsic curvature of the submanifold in the ambient space.
The simplest version can be stated as follows.
\begin{thm}[\cites{Rei77, ESI92}]\label{thm-Reilly}
	Let $M$ be an $m(\ge 2)$-dimensional closed orientable submanifolds in a space form $\mathbb{R}^{n}(c)$. Then the first nonzero eigenvalue $\lambda_1$  of the Laplacian on $M$ satisfies
	\begin{equation}\label{eq-Rei}
		\lambda_1\leq\frac{m}{\vol(M)}\int_M(c+|\mathbf{H}|^2),
	\end{equation}
	where $\mathbf{H}$ is the mean curvature vector of $M$ in $\mathbb{R}^{n}(c)$.

	Equality holds if and only if $M$ is either minimally immersed in $\mathbb{S}^n$ by a subspace of the first eigenspace, or minimally immersed in a geodesic sphere of $\mathbb{R}^{n}(c)$.
\end{thm}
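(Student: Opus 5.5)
The plan is to use the coordinate functions of a suitable model embedding as test functions in the Rayleigh quotient, and to control the resulting Dirichlet energy by the Hsiung--Minkowski formula together with Cauchy--Schwarz. We treat the three cases $c=0$, $c>0$ and $c<0$ separately, but with the same skeleton.

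\textbf{Case $c=0$.} Let $x=(x_1,\dots,x_n)$ be the position vector of $M\subset\mathbb{R}^n$. Translate so that $\int_M x_i=0$ for each $i$; then every $x_i$ is admissible for $\lambda_1$. Since $\sum_i|\nabla x_i|^2=m$ and $\Delta x=m\mathbf{H}$, the min-max principle gives
\[
\lambda_1\int_M|x|^2\le \sum_i\int_M|\nabla x_i|^2=m\vol(M).
\]
The Minkowski identity $\frac12\Delta|x|^2=m+m\langle \mathbf{H},x\rangle$, integrated over $M$, yields $\vol(M)=-\int_M\langle\mathbf{H},x\rangle$. Cauchy--Schwarz then gives
\[
\vol(M)^2\le\int_M|\mathbf{H}|^2\int_M|x|^2 .
\]
Combining the two inequalities gives \eqref{eq-Rei}.

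\textbf{Case $c>0$.} Take $c=1$ and view $\mathbb{S}^n\subset\mathbb{R}^{n+1}$. The coordinate functions no longer need to have mean zero, so we first apply a conformal transformation of $\mathbb{S}^n$ (the Hersch-type center-of-mass argument via a Brouwer degree/fixed point lemma) to arrange $\int_M x_i=0$. We then use the test functions $x_i$ and compute in $\mathbb{R}^{n+1}$, where the mean curvature vector is $\mathbf{H}-x$. Equivalently, we apply the Euclidean bound together with $|\mathbf{H}_{\mathbb{R}^{n+1}}|^2=1+|\mathbf{H}|^2$ and $|x|=1$.

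The difficulty here is that a conformal map $\phi$ changes $\mathbf{H}$. The approach I would try first is the El Soufi--Ilias argument: $\sum_i|\nabla(x_i\circ\phi)|^2$ equals $m$ times the conformal factor, and one shows
\[
\int_M \bigl(\text{conformal factor}\bigr)\le\int_M(1+|\mathbf{H}|^2)
\]
using the conformal invariance of $\int_M(|\mathbf{H}|^2+1)^{m/2}\,dv$ combined with H\"older's inequality. This is the main obstacle; the fallback is to cite \cite{ESI92}.

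\textbf{Case $c<0$.} Use the hyperboloid model in Minkowski space, with test functions built from $x_0$ and $x_i$ after a Lorentz translation to centre the mass, as in Heintze's argument. The Minkowski formula then gives the bound with $c+|\mathbf{H}|^2$.

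\textbf{Equality.} Equality forces equality in Cauchy--Schwarz, so $\mathbf{H}$ is parallel to $x$ with constant ratio and each $x_i$ is a $\lambda_1$-eigenfunction. This means $M$ is minimal in a sphere centred at the centre of mass. In $\mathbb{S}^n$ there are two alternatives: the conformal map is trivial and $M$ is minimal in $\mathbb{S}^n$ via first eigenfunctions, or $M$ is minimal in a geodesic sphere of $\mathbb{R}^n(c)$.
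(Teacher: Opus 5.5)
Your case $c=0$ is Reilly's argument and is correct. It is the paper's $\delta=0$ computation with $\phi(r)=r$, restricted to $\lambda_1$; the paper itself only quotes this theorem from Reilly and El Soufi--Ilias.

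\textbf{The genuine gap is $c<0$.} Running the hyperboloid model ``as in Heintze's argument'' is exactly what the paper's introduction says fails: for negative curvature it only gives the $L^\infty$ bound $\lambda_1\le m(\delta+\max|\mathbf H|^2)$. Concretely, take $c=-1$ and Lorentz-centred coordinates. Then $\sum_{i\ge1}x_i^2=\sinh^2r$, but because of the signature $\sum_{i\ge1}|\nabla x_i|^2=m+|\nabla x_0|^2=m+\sinh^2r\,|\nabla r|^2$. This is \eqref{eq-2.1} with $\phi=\sn$, where $(\phi')^2-\phi^2/\sn^2=\sinh^2r>0$. So you only get $\lambda_1\int_M\sinh^2r\le m\vol(M)+\int_M\sinh^2r\,|\nabla r|^2$. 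The Minkowski formula $\int_M\cosh r=-\int_M\sinh r\,\langle\nabla^Nr,\mathbf H\rangle$ with Cauchy--Schwarz gives $\int_M|\mathbf H|^2\ge(\int_M\cosh r)^2/\int_M\sinh^2r$. Combining the two to reach $\lambda_1\le\frac{m}{\vol(M)}\int_M(|\mathbf H|^2-1)$ would require $\vol(M)\int_M\sinh^2r\,|\nabla r|^2\le m\big[(\int_M\cosh r)^2-\vol(M)\int_M\cosh^2r\big]$. The right side is $\le0$ by Cauchy--Schwarz, so the argument closes only when $r$ is constant on $M$.

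If ``built from $x_0$ and $x_i$'' meant the Klein coordinates $x_i/x_0=\tanh r\cdot x_i/r$ (the paper's choice $\phi=\tn$), the energy side is fine: $\sum_i|\nabla\Phi_i|^2\le m/\cosh^2r=m(1-\tanh^2r)$. But you then need $\vol(M)^2\le\int_M|\mathbf H|^2\int_M\tanh^2r$. The divergence identity for $\tanh r\,\nabla^Nr$ does not yield this, because it carries $+\int_M\tanh^2r\,|\nabla r|^2$ with the wrong sign. That inequality is Niu--Xu's Proposition~2.1, which the paper imports as the key step for $\delta<0$.

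\textbf{The case $c>0$.} Your sentence beginning ``Equivalently'' is already a complete proof. Apply the $c=0$ case to $M\subset\mathbb{R}^{n+1}$, centring by a Euclidean translation. This is allowed because that case holds for any closed submanifold and $|\mathbf H_{\mathbb{R}^{n+1}}|^2=1+|\mathbf H|^2$ is translation invariant. Note that the paper's own $\delta>0$ argument needs a radius restriction, so it would not cover this case.

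Drop the conformal centring, whose justification is wrong. For $m\ge3$, $\int_M(1+|\mathbf H|^2)^{m/2}$ is not conformally invariant; only $\int_M|\mathring B|^m$ is. Also, H\"older goes the wrong way: at best it gives $\lambda_1\le m\big(\frac{1}{\vol(M)}\int_M(1+|\mathbf H|^2)^{m/2}\big)^{2/m}$, which is weaker than \eqref{eq-Rei} by Jensen.

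The mechanism El Soufi--Ilias actually use, and which also repairs $c<0$, is pointwise. Let $\Phi\colon\mathbb{R}^n(c)\to\mathbb{S}^n$ be conformal with $\Phi^*g_{\mathbb{S}^n}=e^{2u}g$, and let $\tilde{\mathbf H}$ be the mean curvature of $\Phi(M)$. The Gauss equation and the conformal change of scalar curvature of $M$ give, on $M$, $e^{2u}(1+|\tilde{\mathbf H}|^2)=c+|\mathbf H|^2-\frac{2}{m}\Delta u-\frac{m-2}{m}|\nabla u|^2$. Integrating gives $\int_Me^{2u}\le\int_M(c+|\mathbf H|^2)$. The centred test functions $x_i\circ\Phi$ give $\lambda_1\vol(M)\le m\int_Me^{2u}$, which finishes the proof.

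Your equality discussion must be adjusted accordingly. For $c>0$ the dichotomy is whether the Euclidean centre of mass is the origin. If it is not, the sphere $S$ in which $M$ is minimal must meet $\mathbb{S}^n$ in a great sphere of $S$, because a closed minimal submanifold of a round sphere cannot lie in an open hemisphere. Hence $M$ is minimal in a geodesic sphere of $\mathbb{S}^n$. For $c<0$ the equality case rests on the argument that is still missing.
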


This inequality was first proved by R. Reilly in \cite{Rei77} for the case $c=0$.
The case $c=1$ can be reduced to the case $c=0$ by embedding the sphere $\mathbb{S}^n$ into the Euclidean space $\mathbb{R}^{n+1}$.
However, the method fails for the case $c=-1$.
El Soufi and Ilias \cite{ESI92} proved \eqref{eq-Rei} in a unified way for all $c$ by constructing test functions via conformal transformations from $\mathbb{R}^n(c)$ to the unit sphere $\mathbb{S}^n(1)$.
Under the same setting as in Theorem \ref{thm-Reilly}, \eqref{eq-Rei} has been generalized to some other operators, such as the Schr\"odinger operator \cite{ESI00}, the $L_T$ operator \cites{Gro00, CW19a}, the $p$-Laplacian \cite{CW19a}, the Paneitz operator \cite{CL11}, etc.

On the other hand, one can replace the ambient space $\mathbb{R}^n(c)$ by a curved Riemannian manifold $N$.
Precisely, we have the following the theorem.
\begin{thm}[\cites{Hei88, NX21}]\label{thm-Hei-NX}
	Let $M$ be an $m(\ge 2)$-dimensional closed orientable submanifold in an $n$-dimensional Riemannian manifold $N$ of the sectional curvature $K_N \le \delta$.
	If $\delta\le 0$, we assume that $N$ is simply-connected;
	if $\delta> 0$, we assume that $M$ is contained in a convex ball of radius $\le \pi/4\sqrt{\delta}$.
	Then the first nonzero eigenvalue $\lambda_1$ of the Laplacian on $M$ satisfies
	\begin{equation}\label{eq-Hei-NX}
		\lambda_1\le \frac{m}{\vol(M)}\int_M\big(\delta+|\mathbf{H}|^2\big).
	\end{equation}

	Equality holds implies that $M$ is minimally immersed in a geodesic sphere of $N$.
\end{thm}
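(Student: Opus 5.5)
We prove Theorem \ref{thm-Hei-NX} by the classical center-of-mass argument, using test functions built from normal coordinates on $N$ and comparison with the model space of curvature $\delta$.

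\textbf{Test functions.} For a point $p\in N$, let $r=d(p,\cdot)$ and let $\exp_p^{-1}$ identify $N$ (or the convex ball) with a domain in $T_pN\cong\mathbb{R}^n$. Take
\[
x_i=\sn(r)\,\frac{\langle \exp_p^{-1}(\cdot),e_i\rangle}{r},\qquad i=1,\dots,n,
\]
where $\sn$ is the generalized sine for curvature $\delta$. Then $\sum_i x_i^2=\sn^2(r)$. By a Brouwer fixed-point (center of mass) argument we choose $p$ so that $\int_M x_i=0$ for all $i$:
\begin{itemize}
\item if $\delta\le0$, use Cartan--Hadamard and properness of the map;
\item if $\delta>0$, minimize the convex functional $\int_M \int_0^{r}\sn$ over the convex ball, which is admissible for radius $\le\pi/4\sqrt\delta$.
\end{itemize}
With this $p$, each $x_i$ is a valid test function, and the Rayleigh quotient gives
\[
\lambda_1\int_M \sn^2(r)\le \int_M\sum_i|\nabla^M x_i|^2 .
\]

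\textbf{Gradient bound.} We need $\sum_i|\nabla^M x_i|^2\le m\,\cn^2(r)+\dots$; more precisely, a quantity bounded by $m$. On the ambient side, $\sum_i|\bar\nabla x_i|^2=\cn^2+(n-1)\sn^2/r^2$ for the radial and tangential parts. Restricting to $M$ and using $\sn(r)/r\le1$ when $\delta\ge0$ gives $\sum_i|\nabla^M x_i|^2\le m$. When $\delta<0$ this is less clean, and we instead use Heintze's variant with $\sum_i|\nabla x_i|^2\le m$ for the hyperbolic model, where we handle the discrepancy via Rauch comparison.

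\textbf{Hessian comparison and Hsiung--Minkowski.} The other ingredient is the Hessian comparison theorem. Under $K_N\le\delta$,
\[
\bar\nabla^2\Big(\int_0^r\sn\Big)\ge \cn\,\bar g .
\]
Taking the trace over $TM$ and writing $\Delta_M f=\operatorname{tr}_M\bar\nabla^2 f+m\langle\mathbf H,\bar\nabla f\rangle$, then integrating, we obtain
\[
m\int_M \cn(r)\le m\int_M \sn(r)\,|\mathbf H| .
\]
Squaring via Cauchy--Schwarz gives
\[
\Big(\int_M\cn\Big)^2\le \int_M|\mathbf H|^2\int_M\sn^2 .
\]
Combining this with $\cn^2+\delta\sn^2=1$ and the Rayleigh bound yields
\[
\lambda_1\int_M\sn^2\le m\,\vol(M)-m\delta\int_M\sn^2+\dots,
\]
and hence, after balancing, $\lambda_1\le\frac{m}{\vol(M)}\int_M(\delta+|\mathbf H|^2)$. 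We follow Heintze's algebra: multiply the Rayleigh inequality by $\int|\mathbf H|^2$ and use $\vol(M)^2\cdot\ldots$.

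\textbf{Main obstacle.} The main obstacle is the case $\delta>0$. There $\cn$ can be small and the Hessian comparison only holds inside the convex ball. The radius restriction $\pi/4\sqrt\delta$ ensures $\cn\ge \sn$, i.e.\ $\cn^2\ge\frac12$, which is what makes the final algebraic combination close. For $\delta<0$, the gradient estimate is the delicate point, and we follow Niu--Xu's modification of the test functions there.

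\textbf{Equality.} In the equality case every inequality above is sharp. Sharpness of the Cauchy--Schwarz step forces $r$ to be constant on $M$ and $\mathbf H$ to be parallel to the radial direction, so $M$ lies in a geodesic sphere about $p$ and is minimal there.
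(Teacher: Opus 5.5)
\textbf{The combination step is missing, and as sketched it cannot close for $\delta>0$.} Lemma~\ref{lem-2.1} with $\phi=\sn$ gives $\sum_i|\nabla x_i|^2\le m-\delta\sn^2(r)|\nabla r|^2$. Your ambient formula $\cn^2+(n-1)\sn^2/r^2$ is wrong: in the model space the spherical part contributes exactly $n-1$, and in general at most $n-1$ by Rauch. So the Rayleigh bound is $\lambda_1\int_M\sn^2\le m\vol(M)-\delta\int_M\sn^2|\nabla r|^2$, not $m\vol(M)-m\delta\int_M\sn^2+\cdots$.

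Suppose you combine this only with your Minkowski inequality $(\int_M\cn)^2\le\int_M|\mathbf H|^2\int_M\sn^2$ and $\cn^2+\delta\sn^2=1$. In the worst case (equality in Minkowski), the target bound is then equivalent to
\[
m\int_M(\cn(r)-\bar c)^2\le\frac1\delta\int_M|\nabla\cn(r)|^2,\qquad \bar c=\frac{1}{\vol(M)}\int_M\cn(r).
\]
This is a Poincar\'e inequality with constant $m\delta$, which you do not have. Cauchy--Schwarz, $(\int_M\cn)^2\le\vol(M)\int_M\cn^2$, points the wrong way.

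The missing idea (Heintze's) is to use $\cn(r)-\bar c$ as an $(n+1)$-st test function, the analogue of the last coordinate of $\mathbb S^n\subset\mathbb R^{n+1}$. Since $\delta^{-1}|\nabla\cn|^2=\delta\sn^2|\nabla r|^2$ cancels the negative term exactly, you get
\[
\lambda_1\Big(\int_M\sn^2+\frac1\delta\int_M(\cn-\bar c)^2\Big)\le m\vol(M).
\]
The bracket equals $\frac{\vol(M)}{\delta}(1-\bar c^2)$, so $\lambda_1\le m\delta/(1-\bar c^2)$. Then
\[
\vol(M)^2\bar c^2\le\int_M|\mathbf H|^2\int_M\sn^2\le\int_M|\mathbf H|^2\,\frac{\vol(M)}{\delta}(1-\bar c^2)
\]
gives $\lambda_1\le m\delta+\frac{m}{\vol(M)}\int_M|\mathbf H|^2$. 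Equivalently, if $\lambda_1\le m\delta$ there is nothing to prove; otherwise Poincar\'e for $\cn-\bar c$ supplies the displayed inequality.

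Your stated role for the radius is also wrong. The center of mass need not be the center of the ball, so on $M$ you only have $r\le 2R\le\pi/(2\sqrt\delta)$ (Remark~\ref{rem-3.1}). That gives $\cn\ge0$, not $\cn^2\ge\frac12$; $\cn\ge0$ is exactly what is needed for $\bar c\ge0$ and for convexity in the center-of-mass step.

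For comparison, the paper's Section 3 discards $-\delta\sn^2|\nabla r|^2$ in order to handle the harmonic mean. It therefore needs the stronger \eqref{eq-pos-key}, which it proves only for the smaller radius $\frac{1}{2\sqrt\delta}\arctan\frac{m}{\sqrt{2m+1}}$. For $\lambda_1$ alone, keeping that term is what makes $\pi/4\sqrt\delta$ reachable.

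\textbf{The case $\delta<0$ is not proved.} With $\phi=\sn$ one has $\sum_i|\nabla x_i|^2\le m+|\delta|\sn^2|\nabla r|^2$, with equality in the hyperbolic model. So there is no ``variant with $\sum_i|\nabla x_i|^2\le m$'', Rauch comparison cannot remove the excess, and this route only gives Heintze's $L^\infty$ bound. Your Minkowski inequality is also too weak here: it yields $\sn$ where $\tn$ is needed.

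The $L^2$ statement requires the change you only name, and which the paper makes in Section 3.2:
\begin{itemize}
\item Take $\phi=\tn$. Then $(\phi')^2-\phi^2/\sn^2=\delta\sn^2/\cn^4\le0$, so $\sum_i|\nabla\Phi_i|^2\le m/\cn^2=m(1+\delta\tn^2)$, and hence $\lambda_1\int_M\tn^2\le m\vol(M)+m\delta\int_M\tn^2$. The $\delta$-term now appears with the right sign.
\item Combine this with the Niu--Xu inequality $\vol(M)^2\le\int_M|\mathbf H|^2\int_M\tn^2$. The paper cites it; it can be derived from \eqref{eq-2.2} with $\phi=\tn$ by absorbing the bad term $\delta\tn^2|\nabla r|^2$, using $|(\nabla^N r)^\perp|^2=1-|\nabla r|^2$, $|\delta|\tn^2<1$ and $m\ge2$.
\end{itemize}
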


Heintze \cite{Hei88} proved \eqref{eq-Hei-NX} for $\delta\ge 0$.
However, for $\delta<0$, the approach in \cite{Hei88} only gives a weaker estimate in term of $L^{\infty}$-norm of the mean curvature rather than $L^2$-norm of the mean curvature, i.e.,
\begin{equation}
	\lambda_1\le m\big(\delta+\max |\mathbf{H}|^2\big).
\end{equation}
Recently, Niu and Xu \cite{NX21} proved \eqref{eq-Hei-NX} for $\delta<0$ by constructing new test functions.

Under the same setting as in Theorem \ref{thm-Hei-NX}, the estimates \eqref{eq-Hei-NX} also have been generalized to the $L_T$ operator and the $p$-Laplacian (cf. \cites{AdCM01, Gro04, Che20, CG25}).

Very recently, Chen \cite{Che26} obtained the sharp extrinsic upper bounds of the harmonic mean of the first $m$ nonzero eigenvalues of the Laplacian on $M$ in $\mathbb{R}^n(c)$, namely,
\begin{thm}[\cite{Che26}*{Theorem 1.13}]
	Let $M$ be an $m$-dimensional closed submanifold in $\mathbb{R}^n(c)$. Then the first $m$ nonzero eigenvalues of the Laplacian satisfy
\begin{equation}\label{eq_Rei_harmonic}
	\mathfrak{H}(\lambda_1,\cdots, \lambda_m) \le \frac{m}{\vol(M)}\int_M\left(c+|\mathbf{H}|^2\right),
\end{equation}
where
\begin{equation*}
	\mathfrak{H}(\lambda_1,\cdots,\lambda_m)=\Big(\frac{1}{m}\sum_{i=1}^m\lambda_i^{-1}\Big)^{-1}
\end{equation*}
represents the harmonic mean of the first $m$ nonzero eigenvalues $\lambda_1,\cdots,\lambda_m$ of the Laplacian on $M$.

Equality holds if and only if $M$ is either minimally immersed in $\mathbb{S}^n$ by a subspace of the first eigenspace, or minimally immersed in a geodesic sphere of $\mathbb{R}^{n}(c)$.

\end{thm}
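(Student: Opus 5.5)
The plan is to combine the variational characterization of the harmonic mean with the El Soufi--Ilias conformal test functions. The starting point is the following fact, a Hersch--type trace inequality: if $f_0=1,f_1,\dots,f_N$ are eigenfunctions of $\Delta$ with eigenvalues $0<\lambda_1\le\lambda_2\le\cdots$, and $X=(X_1,\dots,X_{n+1})\colon M\to\mathbb{R}^{n+1}$ is a map with $\int_M X_A=0$ for all $A$, then one can rotate coordinates so that $X_A\perp f_1,\dots,f_{A-1}$ in $L^2$. For this normalization,
\begin{equation*}
\sum_{i=1}^{m}\frac{1}{\lambda_i}\int_M|\nabla X_i|^2\ \ge\ \sum_{i=1}^m \int_M X_i^2 .
\end{equation*}
To obtain $\sum_i \lambda_i^{-1}$ on the left, I would instead aim for an inequality of the form
\begin{equation*}
\sum_{i=1}^m \frac{1}{\lambda_i}\ \ge\ \frac{\vol(M)}{\int_M (c+|\mathbf H|^2)}.
\end{equation*}
The standard device here is due to Hile--Protter / Yang-type trace arguments, or equivalently Chen's trick. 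Choose the $n+1$ test functions and an orthonormal frame so that the Rayleigh quotients of the first $m$ components are matched against $\lambda_1,\dots,\lambda_m$. Then apply the min--max principle in the form
\begin{equation*}
\sum_{i=1}^k\lambda_i^{-1}\ \ge\ \sum_{i=1}^k \frac{\big(\int_M X_i^2\big)^2}{\int_M X_i^2\int_M|\nabla X_i|^2}\quad\text{after suitable orthogonalization},
\end{equation*}
combined with Cauchy--Schwarz.

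Concretely, I would take $X\colon\mathbb{R}^n(c)\to\mathbb{S}^n\subset\mathbb{R}^{n+1}$ to be the standard embedding ($c=0$: $\mathbb{R}^n\subset\mathbb{R}^{n+1}$ via inverse stereographic projection; $c=1$: identity; $c=-1$: the conformal map from hyperbolic space to the sphere). Compose it with a conformal transformation $\gamma$ of $\mathbb{S}^n$, chosen by the Hersch/El Soufi--Ilias center-of-mass lemma so that $\int_M \gamma\circ X=0$. Then $\sum_A(\gamma\circ X)_A^2=1$, and by the El Soufi--Ilias computation
\begin{equation*}
\sum_A|\nabla (\gamma\circ X)_A|^2\le m\,(c+|\mathbf H|^2)
\end{equation*}
pointwise, up to the usual integration argument via the conformal factor. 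After integration this gives $\sum_A\int|\nabla X_A|^2\le m\int(c+|\mathbf H|^2)$.

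The key new step is the following. Apply a further rotation $O\in SO(n+1)$ so that the matrix $\big(\int_M X_A f_i\big)$ is upper triangular, giving $X_A\perp f_1,\dots,f_{A-1}$. Then order the components so that $\int|\nabla X_A|^2\ge\lambda_A\int X_A^2$ for $A\le m$, and $\int|\nabla X_A|^2\ge\lambda_{m}\int X_A^2$ for $A>m$. Write $a_A=\int X_A^2$, so that $\sum a_A=\vol(M)$. This yields
\begin{equation*}
m\!\int_M(c+|\mathbf H|^2)\ \ge\ \sum_{A=1}^{m}\lambda_A a_A+\lambda_m\sum_{A>m}a_A .
\end{equation*}
To get the harmonic mean I would additionally exploit $\int|\nabla X_A|^2\le\int(c+|\mathbf H|^2)$ for each component, a per-direction bound arising from the fact that the gradient of a single coordinate is a projection: $\sum_A|\nabla X_A|^2$ is split over $m$ tangent directions and each tangent direction contributes at most $(c+|\mathbf H|^2)$-weight. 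Combining $\lambda_A a_A\le\int|\nabla X_A|^2$ with the trace identity through Cauchy--Schwarz, $\sum_A a_A\le\sum_A\lambda_A^{-1}\int|\nabla X_A|^2$, should give
\begin{equation*}
\vol(M)\le \Big(\sum_{i=1}^m\lambda_i^{-1}\Big)\frac1m\cdot m\!\int(c+|\mathbf H|^2)\cdot\frac{1}{m}\cdot m,
\end{equation*}
which is the claim.

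The main obstacle is this last step: the natural bound controls only the sum $\sum_A\int|\nabla X_A|^2$, not how gradient energy is distributed among components. One must show that after optimizing $O$ the weights can be arranged so that $\sum_A\lambda_A^{-1}\int|\nabla X_A|^2\le\frac1m\sum_i\lambda_i^{-1}\cdot m\int(c+|\mathbf H|^2)$, with $\lambda_A:=\lambda_m$ for $A>m$. I would first try averaging over the rotation group, using that the integral over $O$ of the Rayleigh data decouples the energy evenly. If that fails, I would try pointwise choosing $n+1-m$ normal-ish directions that absorb the $|\mathbf H|^2$ part. For the equality case I would trace back each inequality: equality forces $|\nabla X_A|^2$ constant ratios and all $\lambda_i$ equal, reducing to the rigidity case of Theorem \ref{thm-Reilly}.
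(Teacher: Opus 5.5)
\textbf{The proposal has a genuine gap at the harmonic-mean step.} Your setup matches the structure the paper uses in the curved case (Lemma \ref{lem-3.1} and the QR step): the El Soufi--Ilias maps $F=\gamma\circ\Pi\circ X\colon M\to\mathbb S^n$, balanced so that $\int_M F=0$, followed by an orthogonal change of coordinates making $F_A\perp u_1,\dots,u_{A-1}$ in $L^2$. But the proposal stops exactly at the step that turns a bound on $\lambda_1$ into a bound on the harmonic mean. You state the needed inequality $\sum_A\lambda_A^{-1}\int_M|\nabla F_A|^2\le\big(\sum_{i\le m}\lambda_i^{-1}\big)\int_M(c+|\mathbf H|^2)$ as the main open obstacle, and your displayed formula for $\vol(M)$ only restates the goal.

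The remedies you suggest do not work:
\begin{itemize}
\item Averaging over $SO(n+1)$ is incompatible with the argument. A generic rotation applied after the triangularization destroys $F_A\perp u_1,\dots,u_{A-1}$, and that orthogonality is what gives $\int_M|\nabla F_A|^2\ge\lambda_A\int_M F_A^2$.
\item Your heuristic for the per-component bound (each tangent direction carries at most $c+|\mathbf H|^2$) is false pointwise, and so is the pointwise form of $\sum_A|\nabla F_A|^2\le m(c+|\mathbf H|^2)$. For $c=0$, at a point where $\mathbf H=0$ the right side vanishes while $\sum_A|\nabla F_A|^2=m\rho^2>0$. The El Soufi--Ilias estimate holds only after integration.
\end{itemize}

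\textbf{What is missing} is the analogue of Lemma \ref{lem-2.3} together with the rearrangement in \eqref{eq-3.2}. Let $\rho>0$ be the conformal factor of $\gamma\circ\Pi$ along $X$, so that $\sum_A|\nabla F_A|^2=m\rho^2$. Pointwise,
\[
|\nabla F_A|^2=\rho^2|(\nabla^{\mathbb S^n}x_A)^\top|^2\le\rho^2(1-F_A^2)\le\rho^2 .
\]
Put $E_A=\int_M|\nabla F_A|^2$ and $K=\int_M\rho^2$. Then $E_A\le K$, $\sum_A E_A=mK$, and El Soufi--Ilias give $K\le\int_M(c+|\mathbf H|^2)$. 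No further control of how the energy is distributed among the components is needed:
\[
\vol(M)\le\sum_{A\le m}\frac{E_A}{\lambda_A}+\frac{1}{\lambda_m}\sum_{A>m}E_A
=\sum_{A\le m}\frac{E_A}{\lambda_A}+\frac{1}{\lambda_m}\sum_{A\le m}(K-E_A)
\le K\sum_{A\le m}\frac{1}{\lambda_A}.
\]
The last step uses $K-E_A\ge 0$ and $\lambda_m^{-1}\le\lambda_A^{-1}$ for $A\le m$. Your integrated bound $E_A\le\int_M(c+|\mathbf H|^2)$ would also work in place of $K$, but it has to be derived this way.

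\textbf{Equality case.} The same display settles rigidity, which your sketch leaves vague. Since $\nabla F_A$ vanishes where $F_A$ attains its maximum on the closed manifold $M$, we have $\int_M(\rho^2-|\nabla F_A|^2)>0$. So equality forces $\lambda_1=\dots=\lambda_m$, and hence equality in Theorem \ref{thm-Reilly}. For the converse direction you must also check that $\lambda_1$ has multiplicity at least $m$ in the equality configurations; the coordinate functions provide this.
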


Since $\mathfrak{H}(\lambda_1,\cdots,\lambda_m)\ge \lambda_1$, \eqref{eq_Rei_harmonic} indeed improves \eqref{eq-Rei}; it means that if $\lambda_1$ is close to the upper bound, then $\lambda_m$ cannot be far away from the same upper bound.

Inspired by the work \cite{Che26}, we will extend the estimates \eqref{eq-Hei-NX} to the harmonic mean of the first $m$ nonzero eigenvalues of the Laplacian on $M$ in a curved Riemannian manifold $N$.
We prove the following theorems.
\begin{thm}\label{thm-nonpositive}
	Let $M$ be an $m(\ge 2)$-dimensional closed orientable submanifold in an $n$-dimensional simply-connected Riemannian manifold $N$ of the sectional curvature $K_N \le \delta\le 0$.

	Then we have
	\begin{equation*}
		\mathfrak{H}(\lambda_1,\cdots,\lambda_m)\le \frac{m}{\vol(M)}\int_M\big(\delta+|\mathbf{H}|^2\big).
	\end{equation*}
	Equality holds implies that $\lambda_1=\cdots=\lambda_m$ and $M$ is minimally immersed in a geodesic sphere of $N$.
\end{thm}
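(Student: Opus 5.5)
The plan is to combine the Niu--Xu test functions for $\delta\le 0$ (Theorem \ref{thm-Hei-NX}) with a variational principle for the sum $\sum_{i=1}^m\lambda_i^{-1}$ in the spirit of \cite{Che26}. Fix a point $p\in N$ and normal coordinates $x=(x_1,\dots,x_n)$ at $p$, which are global since $N$ is simply connected with $K_N\le\delta\le0$ (Cartan--Hadamard). Let $r=d(p,\cdot)$ and $\sn$ the generalized sine, and set
\[
F=(f_1,\dots,f_n),\qquad f_a=\frac{\sn(r)}{r}\,x_a .
\]
By the standard Brouwer/center-of-mass argument, $p$ can be chosen so that $\int_M f_a=0$ for all $a$. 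This uses that $F$ is a proper radial map and that $\int_M F$, viewed as a function of $p$, points outward on large spheres. Hessian comparison then gives the pointwise bound $dF^{T}dF\le g$ on $TM$, i.e. $|dF(v)|\le|v|$ for every $v\in TM$; in particular $|\nabla F|^2\le m$.

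Next comes the spectral step. Let $\{u_i\}$ be an $L^2$-orthonormal eigenbasis with $\Delta u_i=-\lambda_i u_i$, and put $t_i=\sum_a\big(\int_M f_a u_i\big)^2\ge 0$. Since each $f_a$ has zero mean,
\[
\int_M|F|^2=\sum_{i\ge1}t_i,\qquad \int_M|\nabla F|^2=\sum_{i\ge1}\lambda_i t_i .
\]
To bound each $t_i$ individually, let $w=\int_M F u_i\in\mathbb R^n$. Then $\lambda_i w=\int_M dF(\nabla u_i)$, so
\[
\lambda_i|w|^2=\int_M\langle dF(\nabla u_i),w\rangle\le |w|\int_M|\nabla u_i|\le |w|\sqrt{\vol(M)\lambda_i},
\]
which gives $t_i\le \vol(M)/\lambda_i$. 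We now maximize $\sum t_i$ subject to $0\le t_i\le\vol(M)/\lambda_i$ and $\sum\lambda_i t_i\le\int_M|\nabla F|^2\le m\vol(M)$. Since each unit of $t_i$ costs $\lambda_i$ and the $\lambda_i$ are nondecreasing, the maximum fills the first $m$ slots, and we obtain
\[
\int_M|F|^2\le \vol(M)\sum_{i=1}^m\lambda_i^{-1}=\frac{m\vol(M)}{\mathfrak H(\lambda_1,\dots,\lambda_m)} .
\]
It therefore remains to prove
\[
\int_M|F|^2=\int_M\sn^2(r)\ \ge\ \frac{\vol(M)^2}{\int_M(\delta+|\mathbf H|^2)} .
\]

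This last inequality is the extrinsic step and the main obstacle. I would derive it from the Hsiung--Minkowski-type identity for $\Delta_M$ of a radial function, namely $\tfrac12\Delta_M \sn^2\ge m\cn\cn-\dots$, or more precisely $\Delta_M(\text{primitive of }\sn)\ge m\cn+m\,\langle\mathbf H,\sn\,\partial_r\rangle$. Integrating over $M$ gives $\int_M\cn\le-\int_M\langle\mathbf H,\sn\partial_r\rangle$. Combining this with $\cn^2=1-\delta\sn^2$ and Cauchy--Schwarz yields $\vol(M)^2\le\int_M\sn^2\cdot\int_M(\delta+|\mathbf H|^2)$. For $\delta<0$ this is exactly where Heintze's argument lost the $L^2$ control. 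If the naive version fails there, the fallback is to follow Niu--Xu and use the weighted identity they employ. This may force replacing $F$ by $\phi(r)x/r$ for their modified $\phi$. The spectral step above only used $\int_M f_a=0$ and $|dF(v)|\le|v|$, so it survives any such radial modification provided the pointwise Lipschitz bound $|\phi'|,\ \phi/\sn\le 1$ still holds. I would check that bound first.

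For the equality case, every inequality must be an equality. Equality in $t_i\le\vol(M)/\lambda_i$ together with the extremal allocation forces $\lambda_1=\cdots=\lambda_m$ (and $F$ lies in their eigenspace). Equality in the comparison and Cauchy--Schwarz steps forces $r$ constant on $M$ and $\mathbf H$ proportional to $\partial_r$ with the appropriate constant. Hence $M$ is minimal in a geodesic sphere, exactly as in Theorem \ref{thm-Hei-NX}.
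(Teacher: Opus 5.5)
Your argument works for $\delta=0$, and your spectral step (Parseval, the bound $t_i\le\vol(M)/\lambda_i$, and the knapsack allocation) is a valid alternative to the paper's QR-rotation of normal coordinates plus Lemma \ref{lem-2.3}. The gap is the case $\delta<0$.

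\textbf{The main line with $F=\sn(r)x/r$ fails for $\delta<0$.} The claimed bound $|dF(v)|\le|v|$ is false. Comparison only controls directions tangent to geodesic spheres, where $\sn(r)|d\theta(v)|\le|v|$. Radially, however, $|dF(\partial_r)|=\cn(r)=\cosh(\sqrt{-\delta}\,r)>1$. Lemma \ref{lem-2.1} accordingly gives only $|\nabla F|^2\le m-\delta\sn^2(r)|\nabla r|^2$. Your sketch also does not produce $\vol(M)^2\le\int_M\sn^2\int_M(\delta+|\mathbf H|^2)$ from $\int_M\cn\le\int_M\sn|\mathbf H|$. By concavity of $t\mapsto\sqrt{1-\delta t}$ one has $(\int_M\cn)^2\le\vol(M)^2-\delta\vol(M)\int_M\sn^2$, which is the wrong direction. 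This is exactly Heintze's obstruction.

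\textbf{The fallback $\phi=\tn$, as you formulate it, still falls short.} Keeping only $|dF(v)|\le|v|$ and $|\nabla F|^2\le m$, your spectral step gives $\mathfrak H\int_M\tn^2\le m\vol(M)$. Combined with Niu--Xu's $\vol(M)^2\le\int_M|\mathbf H|^2\int_M\tn^2$, this yields only $\mathfrak H\le\frac{m}{\vol(M)}\int_M|\mathbf H|^2$, missing the negative term $m\delta$.

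That term cannot be recovered on the extrinsic side. The inequality $\int_M\tn^2\ge\vol(M)^2/\int_M(\delta+|\mathbf H|^2)$ fails for a geodesic sphere of radius $R$: there the left side is $\vol(M)\tn^2(R)$, and the right side is the larger $\vol(M)\sn^2(R)$. The $\delta$ must come from the spectral side, through the weight $\phi^2/\sn^2=\cn^{-2}=1+\delta\tn^2$, as in \eqref{eq-neg-1}.

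\textbf{How to repair it with your own method.} Your knapsack argument adapts to the weight:
\begin{itemize}
\item With $\phi=\tn$ one has the sharper pointwise bound $|dF(v)|\le|v|/\cn(r)$.
\item Weighted Cauchy--Schwarz then gives $t_i\le\lambda_i^{-1}\int_M\cn^{-2}$, and also $\sum_i\lambda_i t_i\le m\int_M\cn^{-2}$.
\item The same allocation argument yields $\mathfrak H\int_M\tn^2\le m\int_M(1+\delta\tn^2)$, i.e. $(\mathfrak H-m\delta)\int_M\tn^2\le m\vol(M)$.
\item Niu--Xu's inequality then finishes the proof.
\end{itemize}

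\textbf{Equality case.} The bound $t_i<\lambda_i^{-1}\int_M\cn^{-2}$ is always strict. Equality would force $|\nabla u_i|$ to be a positive multiple of $\cn^{-1}$, hence $|\nabla u_i|>0$ everywhere, which is impossible on closed $M$. It is this strictness, not equality in that bound, that together with extremal allocation forces $\lambda_1=\cdots=\lambda_m$.
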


\begin{thm}\label{thm-positive}
	Let $M$ be an $m(\ge 2)$-dimensional closed orientable submanifold in an $n$-dimensional Riemannian manifold $N$ of the sectional curvature $K_N \le \delta$, where $\delta > 0$.
	We assume that $M$ is contained in a convex ball of radius $\le \frac{1}{2\sqrt{\delta}}\arctan \frac{m}{\sqrt{2m+1}}$.
	Then we have
	\begin{equation*}
		\mathfrak{H}(\lambda_1,\cdots,\lambda_m)\le \frac{m}{\vol(M)}\int_M\big(\delta+|\mathbf{H}|^2\big).
	\end{equation*}
	Equality holds implies that $\lambda_1=\cdots=\lambda_m$ and $M$ is minimally immersed in a geodesic sphere of $N$.

\end{thm}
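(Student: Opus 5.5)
We prove Theorem~\ref{thm-positive} by combining the Heintze test functions with the harmonic-mean trick of \cite{Che26}. Fix a center $p$ of the convex ball, chosen by a degree or fixed-point argument as in \cite{Hei88} so that the functions
\[
f_i=\frac{\sn(r)}{r}\,x_i,\qquad i=1,\dots,n,
\]
are $L^2$-orthogonal to the constants on $M$. Here $r=d(p,\cdot)$, the $x_i$ are normal coordinates at $p$, and $\sn$, $\cn$, $\tn$ are the generalized trigonometric functions. Next we exploit rotational freedom. Since the center is fixed and the $f_i$ transform as a vector under $O(n)$ acting on $T_pN$, we choose an orthonormal frame of $T_pN$ so that the $n$-vector $F=(f_1,\dots,f_n)$ is adapted to the first $m$ eigenspaces. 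Let $u_1,\dots,u_m$ be orthonormal eigenfunctions for $\lambda_1,\dots,\lambda_m$. The $n\times m$ matrix $\big(\int_M f_iu_j\big)$ admits a QR-type rotation of the frame making $\int_M f_iu_j=0$ for $j<i\le m$. Then $f_i$ is orthogonal to $1,u_1,\dots,u_{i-1}$, so the min-max principle gives
\[
\lambda_i\int_M f_i^2\le\int_M|\nabla f_i|^2\quad(i\le m),\qquad \lambda_{m}\int_M f_i^2\le\int_M|\nabla f_i|^2\quad(i>m).
\]

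Following \cite{Che26}, we do not sum these inequalities directly. Instead we write $\int f_i^2\le \lambda_i^{-1}\int|\nabla f_i|^2$ and use $\sum_i f_i^2=\sn^2(r)$. This yields
\[
\int_M\sn^2(r)\le\sum_{i=1}^m\lambda_i^{-1}\int_M|\nabla f_i|^2+\lambda_m^{-1}\sum_{i>m}\int_M|\nabla f_i|^2 .
\]
The pointwise gradient bound for these test functions (Heintze) has the form
\[
\sum_i|\nabla^M f_i|^2=|\nabla^M \sn(r)|^2+\frac{\sn^2}{r^2}\big(\text{tangential angular part}\big).
\]
The key observation in \cite{Che26} is that the frame can additionally be rotated, or the weights averaged, so that each $|\nabla f_i|^2$ with $i\le m$ is controlled by $\frac1m$ of the total, up to a term handled by $\lambda_i^{-1}\ge\lambda_m^{-1}$ monotonicity. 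An alternative is a symmetrization: average over rotations within the span of the first $m$ directions, which preserves the orthogonality constraints only partially. I would instead use the rearrangement inequality. Since $\lambda_i^{-1}$ is decreasing, ordering the frame so that $\int|\nabla f_i|^2$ is increasing in $i$ gives
\[
\sum_{i\le m}\lambda_i^{-1}a_i\le\frac1m\Big(\sum_{i\le m}\lambda_i^{-1}\Big)\sum_{i\le m}a_i
\]
by Chebyshev's inequality. This produces
\[
\frac{m}{\mathfrak H}\cdot\frac1m\int_M\sum_i|\nabla f_i|^2\ge\int_M\sn^2 ,
\]
after bounding the tail $i>m$ in the same way using $\lambda_m^{-1}\le\frac1m\sum\lambda_i^{-1}$. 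The ordering must be compatible with the QR orthogonality, and this compatibility is the main obstacle. I would try to handle it by choosing the frame to diagonalize the quadratic form $v\mapsto\int|\nabla(\langle F,v\rangle)|^2$ restricted to suitable flags, as in \cite{Che26}.

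It then remains to show
\[
\int_M\sum_i|\nabla f_i|^2\le\frac{m}{\vol M}\int_M(\delta+|\mathbf H|^2)\int_M\sn^2 .
\]
Here we use the Hessian comparison $\nabla^2 r\ge\frac{\cn}{\sn}(g-dr^2)$ for $K_N\le\delta$, and the identity $\Delta_M\frac12\sn^2\ge m\cn^2\cdot(\dots)+\langle\mathbf H,\nabla\rangle$ type formula, integrated against the constant and combined with Cauchy--Schwarz, exactly as in Heintze's argument. The radius restriction $\frac{1}{2\sqrt\delta}\arctan\frac{m}{\sqrt{2m+1}}$ is where the nonradial factor $\sn(r)/r$ costs curvature. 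We need the inequality
\[
|\nabla f_i|^2\text{-sum}\le m\,\cn^2\!\cdot\!(\dots)+\dots
\]
together with a quadratic inequality in $\tn(2r)$, which holds precisely when $\tan(2\sqrt\delta r)\le m/\sqrt{2m+1}$. Finally, in the equality case all the inequalities are equalities. Chebyshev's inequality then forces $\lambda_1=\dots=\lambda_m$, and the Hessian comparison and Cauchy--Schwarz steps force $r$ to be constant and $M$ to be minimal in that geodesic sphere.
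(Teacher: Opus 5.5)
There is a genuine gap in the step that converts the right-hand side into $\mathfrak{H}^{-1}$ times something. The reduction before it matches the paper:
\[
\int_M\sn^2(r)\le\sum_{i\le m}\lambda_i^{-1}a_i+\lambda_m^{-1}\sum_{i>m}a_i,\qquad a_i=\int_M|\nabla f_i|^2 .
\]
Your Chebyshev step needs the $a_i$ to be increasing in $i$, but by then the frame is no longer free. Set $c_j=\int_M \frac{\sn(r)}{r}(x_1,\dots,x_n)\,u_j$. The conditions $\int_M f_iu_j=0$ for $j<i$ force $e_1,\dots,e_m$ to be the Gram--Schmidt orthonormalization of $c_1,\dots,c_m$, up to sign, whenever these vectors are independent. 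So you cannot reorder the frame, and ``diagonalizing on suitable flags'' is not an argument.

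No trick that uses only a bound on $\sum_i a_i$ can work. If $a_1$ carried the whole sum, the right-hand side would be $\lambda_1^{-1}\sum_i a_i\ge\mathfrak{H}^{-1}\sum_i a_i$. Control of each $a_i$ individually is indispensable. Your equality argument fails for a related reason: for monotone sequences, equality in Chebyshev's inequality only says that one of the two sequences is constant, and constant $a_i$ would suffice.

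The missing ingredient is the paper's Lemma~2.3, a pointwise bound for each coordinate test function separately. It comes from the Gauss lemma and Rauch comparison applied to $x_i/r$ on geodesic spheres. For $\phi=\sn$ it gives $|\nabla f_i|^2\le 1-\delta\sn^2(r)(x_i/r)^2\le 1$, so $a_i\le\vol(M)$. Lemma~2.1 gives $\sum_{i=1}^n|\nabla f_i|^2\le m-\delta\sn^2(r)|\nabla r|^2\le m$. Bounding the tail by $\lambda_m^{-1}\big(m\vol(M)-\sum_{i\le m}a_i\big)$, you get
\[
\int_M\sn^2(r)\le\sum_{i\le m}\lambda_i^{-1}\vol(M)-\sum_{i\le m}\big(\lambda_i^{-1}-\lambda_m^{-1}\big)\big(\vol(M)-a_i\big)\le\sum_{i\le m}\lambda_i^{-1}\vol(M),
\]
that is, $\mathfrak{H}\int_M\sn^2(r)\le m\vol(M)$, with no ordering needed. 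Equality then forces $\lambda_i=\lambda_m$, because $\vol(M)-a_i>0$: each $f_i$ has a critical point on the closed manifold $M$.

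Your second half is only schematic, and as described it is misdirected. Integrating $\dive(\sn(r)\nabla r)\ge m\cn(r)+m\sn(r)\langle\nabla^N r,\mathbf{H}\rangle$ against the constant gives only $\int_M\cn(r)\le\int_M\sn(r)|\mathbf{H}|$. That does not directly yield $\vol(M)^2\le\int_M(\delta+|\mathbf{H}|^2)\int_M\sn^2(r)$.

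The paper proceeds as follows:
\begin{enumerate}
\item It multiplies this inequality by $\cn(r)\ge0$, integrates by parts, and adds $\int_M\delta\sn^2(r)$.
\item It applies Cauchy--Schwarz to the pairs $\big(\sqrt\delta\,\sn(r)(1+|\nabla r|^2/m),\,\cn(r)|(\nabla^N r)^\perp|\big)$ and $\big(\sqrt\delta,\,|\mathbf{H}|\big)$, then H\"older.
\item The radius hypothesis enters only through $\delta\sn^2(r)(1+|\nabla r|^2/m)^2+\cn^2(r)(1-|\nabla r|^2)\le 1$. This needs $\tan^2(\sqrt\delta\, r)\le m^2/(2m+1)$ for $r\le 2R$, where $R$ is the radius of the convex ball.
\end{enumerate}
So the restriction does not come from any cost of the factor $\sn(r)/r$ in the test functions.
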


\begin{cor}
	Settings as in Theorem \ref{thm-nonpositive} or Theorem \ref{thm-positive}.
	If
	\begin{equation*}
		\lambda_1= \frac{m}{\vol(M)}\int_M\big(\delta+|\mathbf{H}|^2\big),
	\end{equation*}
 then the multiplicity of $\lambda_1$ is at least $m$.
\end{cor}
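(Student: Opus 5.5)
The corollary follows from Theorem \ref{thm-nonpositive} or Theorem \ref{thm-positive} together with the elementary monotonicity of the harmonic mean; no new geometric input is needed. Since $0<\lambda_1\le\lambda_2\le\cdots\le\lambda_m$, we have $\lambda_i^{-1}\le\lambda_1^{-1}$ for every $i$. Hence
\begin{equation*}
	\mathfrak{H}(\lambda_1,\cdots,\lambda_m)=\Big(\frac{1}{m}\sum_{i=1}^m\lambda_i^{-1}\Big)^{-1}\ge \lambda_1 ,
\end{equation*}
with equality if and only if $\lambda_1=\cdots=\lambda_m$.

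Now suppose $\lambda_1=\frac{m}{\vol(M)}\int_M(\delta+|\mathbf{H}|^2)$. Under the hypotheses of either theorem we get the chain
\begin{equation*}
	\frac{m}{\vol(M)}\int_M\big(\delta+|\mathbf{H}|^2\big)=\lambda_1\le\mathfrak{H}(\lambda_1,\cdots,\lambda_m)\le \frac{m}{\vol(M)}\int_M\big(\delta+|\mathbf{H}|^2\big).
\end{equation*}
So every inequality in it is an equality. In particular the upper bound in Theorem \ref{thm-nonpositive} (resp. Theorem \ref{thm-positive}) is attained, and the equality part of that theorem gives $\lambda_1=\cdots=\lambda_m$. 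The first step of the chain gives the same conclusion independently.

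Here $\lambda_1,\dots,\lambda_m$ are the first $m$ nonzero eigenvalues counted with multiplicity. Since $\lambda_1=\lambda_m$, the eigenvalue $\lambda_1$ occupies at least $m$ slots in this list, so its eigenspace has dimension at least $m$. The only point needing care is this convention that eigenvalues are listed with multiplicity, which is how $\mathfrak{H}$ is defined in the paper; beyond that there is no real obstacle.
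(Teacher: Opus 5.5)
Your proof is correct and is essentially the paper's intended argument: the hypothesis squeezes $\lambda_1\le\mathfrak{H}(\lambda_1,\cdots,\lambda_m)\le\frac{m}{\vol(M)}\int_M(\delta+|\mathbf{H}|^2)=\lambda_1$, which forces $\lambda_1=\cdots=\lambda_m$ and hence multiplicity at least $m$. Your finish through the elementary equality case of $\mathfrak{H}\ge\lambda_1$ is the cleanest version, because it uses only the inequality in Theorems \ref{thm-nonpositive} and \ref{thm-positive} and not their equality analysis.
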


\section{Preliminaries}
In this section, we introduce some notations and review some lemmas.

From now on, we always assume that $M$ is an $m$-dimensional submanifold of an $n$-dimensional Riemannian manifold $N$ whose sectional curvature $K_N\le \delta$.
We denote the Levi-Civita connection on $M$ and $N$ by $\nabla$ and $\nabla^N$ respectively.

The following functions naturally arise in the study of Jacobi fields along geodesics in a space form of constant curvature $\delta$.
\begin{gather}
\sn(t)=
\begin{cases}
\frac{1}{\sqrt{\delta}}\sin(\sqrt{\delta}t), & \mbox{for } \delta>0;\\
t, & \mbox{for }  \delta=0;\\
\frac{1}{\sqrt{-\delta}}\sinh(\sqrt{-\delta}t), & \mbox{for }  \delta<0.
\end{cases}\nonumber
\end{gather}
Actually, $\sn(t)$ is the unique solution of the differential equation $y''(t)+\delta y(t)=0$ with the initial conditions $\sn(0)=0 $ and $\sn'(0)=1$.
Now set $\cn(t)=\sn'(t)$ and $\tn(t)=\sn(t)/\cn(t)$, then one can easily check that $\cn^2+\delta\sn^2=1$ and $1/\cn^2=1+\delta\tn^2$.

Given a fixed point $q_0\in N$, for any $q\in N$,
let $r=r(q)$ be the distance function from $q_0$ to $q$ and $(x_1,\cdots,x_n)$ be the normal coordinates of $q$ centered at $q_0$.

By using basic properties of the exponential map and the comparison for Jacobi fields, we have the following lemma.
\begin{lem}[cf. \cite{Gro04}*{Lemmas 1 and 2} and \cite{Che26}*{Lemma 2.1}]\label{lem-2.1}
	Let $\phi(r)$ be a positive function with $\phi(r) \sim r$ as $r\to 0$.
	Then we have
	\begin{gather}
		\sum_{i=1}^n \Big|\nabla\big(\frac{\phi}{r}x_i\big)\Big|^2\le m\frac{\phi^2}{\sn^2}+ \Big((\phi')^2-\frac{\phi^2}{\sn^2}\Big)|\nabla r|^2;\label{eq-2.1}\\
		\dive (\phi\nabla r)\ge m \frac{\phi}{\tn}+\Big(\phi'-\frac{\phi}{\tn}\Big)
		|\nabla r|^2+m\langle \phi\nabla^Nr, \mathbf{H}\rangle.		\label{eq-2.2}
	\end{gather}

	If $N$ has constant sectional curvature $\delta$, then the equality holds in \eqref{eq-2.1} and \eqref{eq-2.2}.
\end{lem}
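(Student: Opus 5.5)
The plan is to work pointwise along $M$ and compare with the model space form of constant curvature $\delta$ through Jacobi field comparison, as in \cite{Gro04}. Fix $q\in M$, let $\gamma$ be the unit speed minimal geodesic from $q_0$ to $q$, and set $r=r(q)$; the hypotheses keep $r$ below the conjugate radius and inside the normal chart. Choose an orthonormal basis $e_1,\dots,e_m$ of $T_qM$. Write $\partial_r=\nabla^N r$ and decompose $e_j=a_j\partial_r+e_j^\perp$, with $e_j^\perp$ orthogonal to $\partial_r$. Then $\sum_j a_j^2=|\nabla r|^2$ and $\sum_j|e_j^\perp|^2=m-|\nabla r|^2$.

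For \eqref{eq-2.1}, note that $\frac{\phi}{r}x_i$ is the $i$-th component of the map $F=\phi(r)\,\exp_{q_0}^{-1}/r$, which sends $q$ to the vector $\phi(r)u$, where $u$ is the unit initial direction of $\gamma$. So $\sum_i|\nabla(\frac{\phi}{r}x_i)|^2=\sum_j|dF(e_j)|^2$. The radial part of $e_j$ contributes $\phi'(r)a_j u$. The tangential part satisfies $dF(e_j^\perp)=\frac{\phi}{r}\,d(\exp_{q_0}^{-1})(e_j^\perp)$, and this vector is orthogonal to $u$ by the Gauss lemma. By Rauch comparison with $K_N\le\delta$, $|d\exp_{q_0}(rv)\,w|\ge \frac{\sn(r)}{r}|w|$ for $w\perp v$. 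Hence $|d(\exp_{q_0}^{-1})(e_j^\perp)|\le \frac{r}{\sn(r)}|e_j^\perp|$. Adding the two pieces gives
\[
\sum_j|dF(e_j)|^2\le(\phi')^2|\nabla r|^2+\frac{\phi^2}{\sn^2}\big(m-|\nabla r|^2\big),
\]
which is \eqref{eq-2.1}. The assumption $\phi\sim r$ ensures that $F$ is smooth at $q_0$.

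For \eqref{eq-2.2}, write
\[
\dive(\phi\nabla r)=\phi'|\nabla r|^2+\phi\,\Delta_M r,
\qquad
\Delta_M r=\sum_j \mathrm{Hess}^N r(e_j,e_j)+m\langle\nabla^N r,\mathbf H\rangle .
\]
The Hessian comparison theorem for $K_N\le\delta$ gives $\mathrm{Hess}^N r(X,X)\ge\frac{\cn}{\sn}\big(|X|^2-\langle X,\partial_r\rangle^2\big)$ with $\frac{\cn}{\sn}=\frac1{\tn}$. Summing over $j$,
\[
\sum_j\mathrm{Hess}^N r(e_j,e_j)\ge\frac1{\tn}\big(m-|\nabla r|^2\big),
\]
and multiplying by $\phi>0$ yields \eqref{eq-2.2}.

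If $N$ has constant curvature $\delta$, then Jacobi fields are exactly $\sn$ times parallel fields, so both comparison steps are equalities; this gives the equality case.

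The main obstacle I expect is the Rauch step for the inverse exponential map. One has to check that $d(\exp^{-1})$ keeps $e_j^\perp$ orthogonal to the radial direction, which is the Gauss lemma. One also has to make sure the comparison holds on the relevant region: this is automatic for simply connected $N$ with $\delta\le0$ by Cartan–Hadamard, and for $\delta>0$ it follows from the convex ball radius being below $\pi/(2\sqrt\delta)$. A further issue is the point $q=q_0$ itself, which I would handle by continuity.
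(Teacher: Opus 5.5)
Your proposal is correct and follows essentially the argument the paper relies on. The paper cites Grosjean and Chen for this lemma, and in Lemma~\ref{lem-2.3} it carries out the same Gauss-lemma-plus-Rauch computation for a single component. As in your plan, splitting each $e_j\in T_qM$ into radial and $\partial_r$-orthogonal parts and bounding $d(\exp_{q_0}^{-1})$ on the orthogonal part gives \eqref{eq-2.1}, and the Hessian comparison together with $\Delta_M r=\sum_j \mathrm{Hess}^N r(e_j,e_j)+m\langle\nabla^N r,\mathbf H\rangle$ gives \eqref{eq-2.2}. One small correction: $\phi\sim r$ only makes $F$ continuous at $q_0$ (in fact differentiable there with $dF_{q_0}=\mathrm{id}$), not smooth; this does not affect the proof, since both inequalities are pointwise statements on $M\setminus\{q_0\}$.
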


\begin{rem}
	If we denote the vector field $\phi(r)\nabla^N r$ by $Z$, then $\big(\dfrac{\phi(r)}{r}x_i\big)_{1\leq i\leq n}$ are the coordinates of $Z$ in the normal local frame, and $\phi\nabla r=Z^\top$ is the tangential projection of $Z$ to $M$.

	Hence, the left hand sides of \eqref{eq-2.2} is just $\dive_M(Z^\top)$.
\end{rem}

For estimating the harmonic mean of the eigenvalues, we also need to control each individual term in the sum on the left hand side of \eqref{eq-2.1}.
\begin{lem}\label{lem-2.3}
	Let $\phi(r)$ be a positive function with $\phi(r) \sim r$ as $r\to 0$.
	Then we have
	\begin{equation*}
		\Big|\nabla\big(\frac{\phi(r)}{r}x_i\big)\Big|^2\le \frac{\phi^2}{\sn^2}(r)+\Big(\phi'^2(r)-\frac{\phi^2}{\sn^2}(r)\Big)\big(\frac{x_i}{r}\big)^2.
	\end{equation*}
\end{lem}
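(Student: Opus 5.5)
Set $f_i=\frac{\phi(r)}{r}x_i$. By the Remark, $(f_1,\dots,f_n)$ are the components of $Z=\phi(r)\nabla^N r$ in the frame $\{\partial_{x_i}\}$ of normal coordinates. The plan is to compute the ambient gradient $\nabla^N f_i$, then project it onto $TM$. Split each tangent vector $X\in T_qN$ as $X=a\,\nabla^N r+X^\perp$, where $X^\perp$ is orthogonal to $\nabla^N r$. Write $x=(x_1,\dots,x_n)=r\,\theta$ with $\theta\in S^{n-1}$, so $f_i=\phi(r)\theta_i$ and $\theta_i=x_i/r$.

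\emph{Step 1: split the derivative.}
\[
X(f_i)=\phi'(r)\,\theta_i\,X(r)+\phi(r)\,X(\theta_i).
\]
Since $\theta_i$ is constant along radial geodesics, $X(\theta_i)=X^\perp(\theta_i)$, while $X(r)=\langle X,\nabla^N r\rangle=a$.

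\emph{Step 2: the tangential part via Jacobi fields.} Write $X^\perp=d\exp_{q_0}(r\,w)$ with $w\perp\theta$. Then $X^\perp(\theta_i)=w_i$, the $i$-th Euclidean component of $w$. Because $K_N\le\delta$ and $N$ is simply connected (or we are inside a convex ball), Rauch comparison gives $|X^\perp|\ge \sn(r)\,|w|$. Hence the covector $\Theta:=d\theta_i$, restricted to the orthogonal complement of $\nabla^N r$, has norm at most $|(e_i)^{\perp\theta}|/\sn(r)$, where $(e_i)^{\perp\theta}=e_i-\theta_i\theta$ has $|(e_i)^{\perp\theta}|^2=1-\theta_i^2$.

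\emph{Step 3: combine and project onto $M$.} Choose an orthonormal frame $\{e_\alpha\}_{\alpha=1}^m$ of $T_qM$ and write $e_\alpha=a_\alpha\nabla^N r+e_\alpha^\perp$. Then
\[
|\nabla f_i|^2=\sum_\alpha\big(\phi'\theta_i a_\alpha+\phi\,\Theta(e_\alpha^\perp)\big)^2 .
\]
The gradient of $f_i$ on $M$ is the tangential projection of the ambient vector field $V_i=\phi'\theta_i\,\nabla^N r+\phi\,(\Theta)^\sharp$. This gives $|\nabla f_i|^2=|V_i^\top|^2$.

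The key estimate is the following. On the plane spanned by $\nabla^N r$ and the relevant perpendicular direction, the quadratic form $X\mapsto (X f_i)^2$ is dominated by
\[
Q(X)=\phi'^2\theta_i^2\,a^2+\frac{\phi^2}{\sn^2}(1-\theta_i^2)\,|X^\perp|^2 ,
\]
because $V_i$ is a sum of two orthogonal pieces with these norms. The obstacle is that the cross terms do not simply vanish after projection onto $TM$. I would handle this by noting that $(Xf_i)^2=\langle V_i,X\rangle^2\le |V_i|^2|X|^2$ only gives a crude bound. The sharper route uses the fact that $V_i$ has only one direction in each orthogonal piece. Consequently
\[
|V_i^\top|^2\le |V_i|^2=\phi'^2\theta_i^2+\frac{\phi^2}{\sn^2}(1-\theta_i^2)
=\frac{\phi^2}{\sn^2}+\Big(\phi'^2-\frac{\phi^2}{\sn^2}\Big)\theta_i^2 ,
\]
which is exactly the claimed inequality, since $\theta_i=x_i/r$.

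So the main step is Step 2: bounding the norm of the gradient of the angular coordinate $\theta_i$ by $\sqrt{1-\theta_i^2}/\sn(r)$ using Rauch comparison, exactly as in the proof of Lemma \ref{lem-2.1}. After that, the bound $|\nabla f_i|=|(\nabla^N f_i)^\top|\le|\nabla^N f_i|$ finishes the argument. In constant curvature, equality $|X^\perp|=\sn|w|$ holds, so the ambient identity is exact, consistent with Lemma \ref{lem-2.1}.
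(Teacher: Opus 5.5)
Your proposal is correct and follows essentially the same route as the paper: you write $\nabla^N f_i=\phi'\theta_i\nabla^N r+\phi\nabla^N\theta_i$ as a sum of orthogonal radial and angular pieces (Gauss lemma), bound $|\nabla^N\theta_i|^2\le(1-\theta_i^2)/\sn^2(r)$ by Rauch comparison, and conclude with $|\nabla f_i|\le|\nabla^N f_i|$. Two cleanups are needed: the displayed identity for $|V_i|^2$ should be an inequality $\le$ unless $N$ has constant curvature $\delta$, and the aside that $(Xf_i)^2$ is dominated by $Q(X)$ is false in general (for $N=\mathbb{R}^n$, $\phi=r$, $X=e_i$ with $0<\theta_i^2<1$ one gets $Q(X)=\theta_i^4+(1-\theta_i^2)^2<1=(Xf_i)^2$), so drop it, since the projection bound makes it unnecessary.
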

\begin{proof}
	A direct computation gives
	\begin{align}
		\Big|\nabla\big(\frac{\phi(r)}{r}x_i\big)\Big|^2
		&\le \Big|\nabla^N \big(\frac{\phi(r)}{r}x_i\big)\Big|^2=\Big|\frac{x_i}{r}\nabla^N\phi(r)+\phi(r)\nabla^N \big(\frac{x_i}{r}\big)\Big|^2\nonumber\\
		&=\phi'^2(r) \big(\frac{x_i}{r}\big)^2+\phi^2(r)\Big|\nabla^{\Sigma_r} \big(\frac{x_i}{r}\big)\Big|^2\label{eq-lem2.3-1}\\
		&\le \phi'^2(r) \big(\frac{x_i}{r}\big)^2+\frac{\phi^2}{\sn^2}(r)\Big|\nabla^{S^{n-1}} \big(\frac{x_i}{r}\big)\Big|^2\label{eq-lem2.3-2}\\
		&=\phi'^2(r) \big(\frac{x_i}{r}\big)^2+\frac{\phi^2}{\sn^2}(r)\Big(1-\big(\frac{x_i}{r}\big)^2\Big)\nonumber\\
		&=\frac{\phi^2}{\sn^2}(r)+\Big(\phi'^2(r)-\frac{\phi^2}{\sn^2}(r)\Big)\big(\frac{x_i}{r}\big)^2.\nonumber
	\end{align}
	We used $|\nabla^N r|=1$ and the Gauss lemma in \eqref{eq-lem2.3-1}, since $x_i/r$ does not depend on $r$. \eqref{eq-lem2.3-2} follows from the Rauch comparison theorem, where $\Sigma_r$ is the geodesic sphere in $N$ of radius $r$ centered at $q_0$, and $S^{n-1}$ is the unit sphere in the Euclidean space $\mathbb{R}^n$.

\end{proof}

\section{Proof of Main Theorems}
It is well known that the eigenvalues of Laplacian on $M$ are discrete and satisfies
\begin{equation*}
	0=\lambda_0<\lambda_1\le \lambda_2\le \cdots \to +\infty.
\end{equation*}
Let $\{u_i\}_{i\ge 0}$ is an orthonormal set of eigenfunctions.
According to the variational characterization of $\lambda_i (i\ge 1)$:
\begin{equation}\label{eq-vc}
	\lambda_i=\inf_{u\in H^1(M)\setminus \{0\}}\left\{\frac{\int_M |\nabla u|^2}{\int_M u^2}\right|\left.\int_M uu_j=0, j=0, \cdots, i-1\right\},
\end{equation}
let us construct the test functions.

The following lemma provides suitable test functions for the upper bound estimates of the first eigenvalue.
Special versions can be traced back to \cites{Cha78, Hei88}.
	\begin{lem}[cf. \cite{Che26}*{Lemma 2.3}]\label{lem-3.1}
		Let $\phi(r)$ be a positive function with $\phi(r) \sim r$ as $r\to 0$.
		Under the assumptions as in Theorem \ref{thm-nonpositive} or Theorem \ref{thm-positive}, there exists a point $q_0 \in N$ such that
		\begin{equation}\label{eq-3.1}
			\int_M \frac{\phi(r)}{r} x_i\, dv_M = 0 \, (i=1,\cdots,n).
		\end{equation}
	\end{lem}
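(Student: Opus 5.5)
The plan is the classical center-of-mass argument of Chavel and Heintze, run through Brouwer's fixed point theorem. Set $\Omega=N$ in the case of Theorem \ref{thm-nonpositive}. In the case of Theorem \ref{thm-positive}, let $\Omega$ be the closed convex ball $B$ containing $M$. For $q\in\Omega$ the exponential map $\exp_q$ is a diffeomorphism onto a neighborhood of $M$. In the nonpositive case this follows from Cartan--Hadamard. In the positive case the radius is $<\pi/(2\sqrt\delta)$ and $B$ is convex, so $\exp_q^{-1}$ is smooth on $B$. Since $\frac{\phi(r)}{r}x_i$ are the normal-coordinate components of the vector field $Z=\phi(r)\nabla^N r$, we can define the $T_qN$-valued quantity
\begin{equation*}
	X(q)=\int_M \frac{\phi(r_q(p))}{r_q(p)}\exp_q^{-1}(p)\,dv_M(p),
\end{equation*}
with $r_q(p)=d(q,p)=|\exp_q^{-1}(p)|$. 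The condition \eqref{eq-3.1} says exactly that $X(q_0)=0$; this is independent of the choice of orthonormal frame. Since $\phi(r)/r\to1$ as $r\to0$, the integrand is continuous, so $X$ is a continuous vector field on $\Omega$.

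To produce a zero, I would show that $-X$ points into $\Omega$ along the boundary, and then apply Brouwer. For $q\in\partial B$ and $p\in M\subset B$, convexity gives that the geodesic from $q$ to $p$ points into $B$, i.e.\ $\langle \exp_q^{-1}(p),\nu(q)\rangle<0$, where $\nu$ is the outward normal. Since $\phi>0$, $X(q)$ points strictly inward. The same holds if $M$ lies in a slightly smaller concentric ball, and $q$ ranges over the boundary of that smaller ball. Identify $B$ with a Euclidean ball via $\exp_{c}^{-1}$ about its center, and consider the map $q\mapsto \exp_q(\varepsilon X(q))$, or more simply a vector field on a closed ball that is inward on the boundary. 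The standard fact that such a field has a zero (Brouwer, or the Poincaré--Hopf index argument) then gives $q_0$ with $X(q_0)=0$.

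In the nonpositive case $N$ is not compact, so I would take a large geodesic ball $B_R(c)$ containing $M$. By Hessian comparison with $K_N\le\delta\le0$, distance spheres are convex, so geodesic balls in a Cartan--Hadamard manifold are convex, and the same inward-pointing argument applies on $\partial B_R$.

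The main obstacle is the transition from "inward-pointing on the boundary" to "has a zero". I would handle it cleanly by pulling everything back to $T_cN\cong\mathbb{R}^n$ through $\exp_c$ and applying Brouwer to $F(v)=v+\varepsilon\,d(\exp_c^{-1})(X)$, suitably truncated. One must also check the strict inequality at boundary points $q\in M\cap\partial B$. This is why I would shrink to a ball whose interior strictly contains $M$: the convex ball is open or slightly enlargeable, since the radius bound leaves room inside the convexity radius. After that, strictness follows from the compactness of $M$.
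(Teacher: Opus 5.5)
Your proposal is correct and essentially matches what the paper relies on. The paper gives no proof of its own; it cites \cite{Che26}*{Lemma 2.3} and the Chavel--Heintze center-of-mass argument, which is exactly your construction of $X(q)=\int_M \frac{\phi(r_q)}{r_q}\exp_q^{-1}(p)\,dv_M(p)$, shown to point strictly inward on the boundary of a convex ball and then to vanish by Brouwer. Two points are cosmetic only: it is $X$, not $-X$, that points inward, and strictness at $q\in M\cap\partial B$ holds without shrinking the ball, because the integrand of $\langle X(q),\nu(q)\rangle$ is strictly negative for every $p\neq q$ and $\{p=q\}$ has measure zero in $M$.
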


	\begin{rem}\label{rem-3.1}
		If $M$ is contained in a convex ball of radius $R$, then the distant function $r(\cdot)=d(\cdot,q_0)\le 2R$ on $M$.
	\end{rem}

	For convienience, we denote $\Phi_i=\frac{\phi(r)}{r}x_i$ for $i=1,\cdots,n$.
	We claim that each $\Phi_i$ can be assumed to be a test function for $\lambda_i$.

	Firstly, \eqref{eq-3.1} means that $\Phi_i$ is $L^{2}$-orthogonal to the first eigenfunction $u_0$ (a nonzero constant).
	Next, if we denote
	\begin{equation*}
		d_{ij}=\int_M \Phi_i u_j,  \mbox{ for } 1\le i, j\le n,
	\end{equation*}
	then the matrix $D=(d_{ij})=QR$ via the QR-decomposition, where $Q=(q_{ij})\in O(n)$ and $R$ is an upper triangular matrix.
	Equivalently, we have $R=Q^TD$.
	Hence, under a new normal coordinate system (still denoted by $\{x_i\}_{1\le i \le n}$) obtained by an othorogonal transformation,
	we have
	\begin{equation}\label{eq-ortho-i}
		\int_M \Phi_i u_j=0,  \mbox{ for } 1\le j< i\le n.
	\end{equation}

	Now, by the variational characterization \eqref{eq-vc}, we have
	\begin{equation*}
		\lambda_i\int_M \Phi_i^2\le \int_M | \nabla \Phi_i|^2.
	\end{equation*}
	This gives
	\begin{equation}\label{eq-3.2}
		\begin{aligned}
			\int_M \phi(r)^2\le {} &  \sum_{i=1}^{n} \frac{1}{\lambda_i} \int_M |\nabla\Phi_i|^2= \sum_{i=1}^{m} \frac{1}{\lambda_i} \int_M |\nabla\Phi_i|^2+\sum_{i=m+1}^{n} \frac{1}{\lambda_i} \int_M |\nabla\Phi_i|^2\\
			\le{} & \sum_{i=1}^{m} \frac{1}{\lambda_i} \int_M |\nabla\Phi_i|^2+ \frac{1}{\lambda_m}	\int_M \sum_{i=m+1}^{n}|\nabla\Phi_i|^2 \\
			\le{} & \sum_{i=1}^{m} \frac{1}{\lambda_i} \int_M |\nabla\Phi_i|^2 + \frac{1}{\lambda_m} \int_M \Big(m\frac{\phi^2}{\sn^2}+ \big((\phi')^2-\frac{\phi^2}{\sn^2}\big)|\nabla r|^2-\sum_{i=1}^{m}|\nabla\Phi_i|^2 \Big) \\
			={} & \sum_{i=1}^{m} \frac{1}{\lambda_i} \int_M |\nabla\Phi_i|^2 + \frac{1}{\lambda_m} \sum_{i=1}^{m} \int_M \Big(\frac{\phi^2}{\sn^2}-|\nabla\Phi_i|^2 \Big)+\frac{1}{\lambda_m}\int_M \big((\phi')^2-\frac{\phi^2}{\sn^2}\big)|\nabla r|^2.
		\end{aligned}
	\end{equation}

	If $(\phi')^2-\frac{\phi^2}{\sn^2}\le 0$,
	then $|\nabla\Phi_i|^2\le \frac{\phi^2}{\sn^2}$ by Lemma \ref{lem-2.3}.
	Hence, \eqref{eq-3.2} gives
	\begin{equation*}
		\int_M \phi(r)^2 \le \sum_{i=1}^{m} \frac{1}{\lambda_i} \int_M |\nabla\Phi_i|^2 +  \sum_{i=1}^{m} \frac{1}{\lambda_i}\int_M \Big(\frac{\phi^2}{\sn^2}-|\nabla\Phi_i|^2 \Big)= \sum_{i=1}^{m}\frac{1}{\lambda_i} \int_M \frac{\phi^2}{\sn^2},
	\end{equation*}
	equivalently,
	\begin{equation}\label{eq-init-estimate}
		\mathfrak{H}(\lambda_1,\cdots,\lambda_m)\int_M \phi(r)^2\le \int_M m\frac{\phi^2}{\sn^2}.
	\end{equation}


\subsection{The case $\delta=0$.}
We have $\sn(r)=\tn(r)=r$.
We take $\phi(r)=r$, then $(\phi')^2-\frac{\phi^2}{\sn^2}=0$ and
\eqref{eq-init-estimate} becomes
\begin{equation}\label{eq-zero-1}
	\mathfrak{H}(\lambda_1,\cdots,\lambda_m)\int_M r^2\le \int_M m = m\vol(M).
\end{equation}

By \eqref{eq-2.2}, we have
\begin{equation*}
	0=\int_M \dive_M(Z^\top) \ge m \vol(M)+m\int_M \langle r\nabla^N r, \mathbf{H}\rangle.
\end{equation*}
Hence,
\begin{equation}\label{eq-zero-2}
	\vol(M)^2 \le \left(\int_M \langle r\nabla^N r, \mathbf{H}\rangle\right)^2 \le \left(\int_M |r\nabla^N r| |\mathbf{H}|\right)^2 \le \int_M r^2 \int_M |\mathbf{H}|^2.
\end{equation}

Combining \eqref{eq-zero-1} and \eqref{eq-zero-2}, we obtain
\begin{equation*}
	\mathfrak{H}(\lambda_1,\cdots,\lambda_m)\le \frac{m}{\vol(M)}\int_M |\mathbf{H}|^2.
\end{equation*}

\subsection{The case $\delta<0$.}
We take $\phi(r)=\tn(r)$, then $(\phi')^2-\frac{\phi^2}{\sn^2}\le 0$ and
\eqref{eq-init-estimate} gives
\begin{equation}\label{eq-neg-1}
	\mathfrak{H}(\lambda_1,\cdots,\lambda_m)\int_M \tn(r)^2\le m \int_M \frac{1}{\cn(r)^2} = m \int_M (1+\delta\tn(r)^2),
\end{equation}

Recalling the following key inequality (\cite{NX21}*{Proposition 2.1})
\begin{equation*}
	\vol(M)^2\le \int_M |\mathbf{H}|^2\int_M \tn(r)^2,
\end{equation*}
we obtain from \eqref{eq-neg-1} that
\begin{equation*}
	\mathfrak{H}(\lambda_1,\cdots,\lambda_m)\le \frac{m}{\vol(M)}\int_M (|\mathbf{H}|^2+\delta).
\end{equation*}

\subsection{The case $\delta>0$.}
We take $\phi(r)=\sn(r)$, then $(\phi')^2-\frac{\phi^2}{\sn^2}=-\delta\sn^2\le 0$ and
\eqref{eq-init-estimate} gives
\begin{equation}\label{eq-pos-1}
	\mathfrak{H}(\lambda_1,\cdots,\lambda_m)\int_M \sn^2(r)\le m \int_M 1 = m\vol(M).
\end{equation}

If we can prove
\begin{equation}\label{eq-pos-key}
	\vol(M)^2  \le \int_M (|\mathbf{H}|^2+\delta) \int_M \sn^2(r),
\end{equation}
then it follows from \eqref{eq-pos-1} that
\begin{equation*}
	\mathfrak{H}(\lambda_1,\cdots,\lambda_m)\le \frac{m}{\vol(M)}\int_M (|\mathbf{H}|^2+\delta).
\end{equation*}
Hence, we will prove \eqref{eq-pos-key} in the following.

Since $0\le r \le \pi/2\sqrt{\delta}$,
from \eqref{eq-2.2} we derive that
\begin{align*}
    \int_M \cn^2(r)&\le \frac{1}{m}\int_M \cn(r)\dive (\sn(r)\nabla r)- \int_M \langle \sn(r)\cn(r)\nabla^N r, \mathbf{H}\rangle \\
    & = -\frac{1}{m} \int_M \langle \nabla \cn(r), \sn(r)\nabla r\rangle - \int_M \langle \sn(r)\cn(r)\nabla^N r, \mathbf{H}\rangle  \\
    & = \frac{1}{m}\int_M \delta\sn^2(r)|\nabla r|^2 - \int_M \langle \sn(r)\cn(r)(\nabla^N r)^{\perp}, \mathbf{H}\rangle.
\end{align*}

    By using $\delta \sn^2+ \cn^2=1$,
    we have
    \begin{align*}
        \vol(M) & = \int_M \Big(\delta\sn^2(r)+\frac{1}{m}\delta\sn^2(r)|\nabla r|^2 -\sn(r)\cn(r)\langle (\nabla^N r)^{\perp}, \mathbf{H}\rangle\Big) \\
        & \le \int_M \sn(r)\Big(\delta\sn(r)\big(1+\frac{|\nabla r|^2}{m}\big) +\cn(r)|(\nabla^N r)^{\perp}|| \mathbf{H}|\Big) \\
        & \le \int_M \sn(r)\Big(\delta\sn^2(r)\big(1+\frac{|\nabla r|^2}{m}\big)^2 +\cn^2(r)|(\nabla^N r)^{\perp}|^2\Big)^{1/2} \Big(\delta+|\mathbf{H}|^2\Big)^{1/2}\\
        & \le \Big(\int_M \sn^2(r)\Big(\delta\sn^2(r)\big(1+\frac{|\nabla r|^2}{m}\big)^2 +\cn^2(r)|(\nabla^N r)^{\perp}|^2\Big)\Big)^{1/2} \Big(\int_M (\delta+|\mathbf{H}|^2)\Big)^{1/2}.
    \end{align*}
    where we used the Cauchy-Schwarz inequality and the H\"{o}lder inequality in the second inequality and the last inequality, respectively.

  Since $1=|\nabla^N r|^2 = |\nabla r|^2 + |(\nabla^N r)^{\perp}|^2$, we have
    \begin{align*}
        & \delta\sn^2(r)\big(1+\frac{|\nabla r|^2}{m}\big)^2 +\cn^2(r)|(\nabla^N r)^{\perp}|^2
        \\
        ={}& \delta\sn^2(r)\big(1+\frac{|\nabla r|^2}{m}\big)^2 +\cn^2(r)(1-|\nabla r|^2) \\
        ={}& 1 + |\nabla r|^2 \big(\frac{2\delta\sn^2(r)}{m}-\cn^2(r)+\frac{\delta\sn^2(r)}{m^2}|\nabla r|^2\big)  \\
        \le {}& 1 + |\nabla r|^2 \Big(\delta\sn^2(r)\big(\frac{2}{m}+\frac{|\nabla r|^2}{m^2}\big)-\cn^2(r)\Big)  \\
        \le {}& 1 + |\nabla r|^2 \Big(\delta\sn^2(r)\frac{2m+1}{m^2}-\cn^2(r)\Big)  \\\
        = {}& 1 + |\nabla r|^2 \cn^2(r)\Big(\tan^2(\sqrt{\delta}r)\frac{2m+1}{m^2}-1\Big)  \\
        \le{} & 1
    \end{align*}
    provided that $r\le \frac{1}{\sqrt{\delta}}\arctan \frac{m}{\sqrt{2m+1}}$.
    According to Remark \ref{rem-3.1}, it indeed holds by the assumption.
    Thus, we obtain
    \begin{equation*}
        \vol(M) \le \Big(\int_M \sn^2(r)\Big)^{1/2} \Big(\int_M (\delta+|\mathbf{H}|^2)\Big)^{1/2},
    \end{equation*}
    which is equivalent to \eqref{eq-pos-key}.
    We complete the proof of Theorem \ref{thm-positive}.

\subsection{Equality case.}
We discuss the equality case in a unified way.

If the equality holds, we can check the proofs and see that $\nabla^N r$ must be parallel to $\mathbf{H}$.
Thus, $M$ lies in a geodesic sphere of $N$ since $\nabla r=0$ and the mean curvature of $M$ in this geodesic sphere is zero.

On the other hand, checking process of the proofs, we have
\begin{equation*}
    \frac{1}{\lambda_m} \sum_{i=1}^{m} \int_M \Big(\frac{\phi^2}{\sn^2}-|\nabla\Phi_i|^2 \Big)=\sum_{i=1}^{m} \frac{1}{\lambda_i}\int_M \Big(\frac{\phi^2}{\sn^2}-|\nabla\Phi_i|^2 \Big).
\end{equation*}
Since $M$ is closed, there exists at least one point $p\in M$ such that $|\nabla\Phi_i|^2=0<\frac{\phi^2}{\sn^2}(r)$ at $p$.
Hence, $\lambda_1=\cdots=\lambda_m$.


\end{document}